\newtheorem{theorem}{Theorem}
\newtheorem{lemma}[theorem]{Lemma}
\newtheorem{corollary}[theorem]{Corollary}
\newtheorem{proposition}[theorem]{Proposition}
\DeclareMathOperator{\Aut}{Aut} 
\DeclareMathOperator{\stab}{stab}
\newcommand{\bs}{\mathcal{B}}
\newcommand{\ds}{\mathcal{D}}
\title{Base Size Sets and Determining Sets}
\author{Joshua D. Laison, Erin M. McNicholas \\[-0.8ex]
\small Mathematics Department \\[-0.8ex]
\small Willamette University \\[-0.8ex]
\small 900 State St. \\[-0.8ex]
\small Salem, OR 97301 \and
Nicole Seaders \\[-0.8ex]
\small Department of Mathematics \\[-0.8ex]
\small Oregon State University \\[-0.8ex]
\small 368 Kidder \\[-0.8ex]
\small Corvallis, OR 97331}
\begin{document}
\maketitle

\begin{abstract}
Bridging the work of Cameron, Harary, and others, we examine the base size set $\bs(G)$ and determining set $\ds(G)$ of several families of groups.  The base size set is the set of base sizes of all faithful actions of the group $G$ on finite sets.  The determining set is the subset of $\bs(G)$ obtained by restricting the actions of $G$ to automorphism groups of finite graphs.  We show that for fininte abelian groups, $\bs(G)=\ds(G)=\{1,2,\ldots,k\}$ where $k$ is the number of elementary divisors of $G$.  We then characterize $\bs(G)$ and $\ds(G)$ for dihedral groups of the form $D_{p^k}$ and $D_{2p^k}$.  Finally, we prove $\bs(G)\neq\ds(G)$ for dihedral groups of the form $D_{pq}$ where $p$ and $q$ are distinct odd primes.

\end{abstract}

\section{Introduction}

Following \cite{Bailey11}, a \textit{base} for a permutation group $P$ acting faithfully on a finite set $S$ is a subset $B \subseteq S$ chosen so that its pointwise stabilizer in $P$ is trivial, i.e. $\{g \in P \,|\, g(x)=x \text{ for all } x\in B\}$ contains only the identity.  The \textit{base size} of the action of $P$ on $S$, $b(P)$, is the cardinality of the smallest base for $P$ in this action.  Bailey and Cameron recently surveyed research on base size, noting the parameter has been extensively studied and has arisen in many ``different guises''\cite{Bailey11}.  Research has focused on bounding the base size of families of permutation groups, such as primitive groups \cite{Liebeck84,Robinson97,Seress96} and almost simple groups \cite{Burness07,Burness09,Liebeck99}.  Authors have also studied base sizes of actions of $S_n$ on subsets of $\{1,2,\ldots, n\}$ \cite{Boutin06,Caceres13,Halasi12}.

By realizing a group $G$ as the image $G_P$ of some faithful action on a finite set $S$, we can view the elements of $G$ as permutations of $S$ expressed in cycle notation.  We can then determine the base size of this particular permutation representation of $G$.  Note, base size is a parameter of the permutation representation $G_P$, and not an invariant of the group $G$.  Further, by $g\in G$ viewed as a permutation, we mean the image of $g$ under the action, i.e. $g\in G_P$.  With this terminology in mind, we define the \textit{base size set} of $G$, $\bs(G)$, as the set of all base sizes of all faithful actions of $G$ on finite sets. Symbolically
$$\bs(G)=\{b(G_P)|G_P\in\mathcal{A}(G)\},$$ where $\mathcal{A}(G)$ is the set of all faithful actions of $G$ on finite sets.

We can restrict this definition to certain types of actions: let $\mathcal{A}' \subseteq \mathcal{A}$ be the set of all faithful actions of $G$ realized as the automorphism group $\Aut(\Gamma)$ of some finite graph $\Gamma$.  In this case $S=V(\Gamma)$, the set of vertices of $\Gamma$.  The \textit{determining set} or \textit{fixing set} $\ds(G)$ is the set of all base sizes of actions in $\mathcal{A}'$.  
Gibbons and Laison introduced $\ds(G)$ in 2009 \cite{Gibbons09}.  They characterized the determining sets of finite abelian groups, and provided upper and lower bounds on the determining sets of symmetric groups.  For a given graph $\Gamma$, the base size of its automorphism group $\Aut(\Gamma)$ is called the \textit{determining number} or \textit{fixing number} of $\Gamma$ (so the determining number of $\Gamma$ is one element of the determining set of $\Aut(\Gamma)$).  Determining numbers were first introduced by Boutin and independently by Erwin and Harary in 2006 \cite{Boutin06,Erwin06}.  Boutin and C{\'a}ceres et. al. found determining numbers of families of Kneser graphs \cite{Boutin06,Caceres13}, and Boutin found the determining number of a Cartesian product of graphs \cite{Boutin09}.

Note that by definition, $\ds(G) \subseteq \bs(G)$.  We show that for finite abelian groups and for some dihedral groups these sets are the same. Our main result is that there exist groups $G$ for which $\ds(G) \neq \bs(G)$.

\section{Basic Properties of $\bs(G)$ and $\ds(G)$}\label{general}

Given a permutation group $P$, if we stabilize each element of a base $B$ in sequence, we get a chain of subgroups $P > \stab(b_1) > \stab(b_1, b_2) > \ldots > e$, where $\stab(b_1,b_2,\ldots,b_k)$ is the pointwise stabilizer of the elements $b_1,b_2,\ldots,b_k$.  The number of subgroups in this chain is no greater than the \textit{length} of $P$, i.e. the size of the longest chain of subgroups of $P$ (counting $P$ but not $e$).  The length of $P$ is in turn at most the number of prime factors of $|P|$ (counting multiplicities).
Therefore for any finite group $G$, $\bs(G)$ and $\ds(G)$ are both contained in the set $\{1, 2, \ldots, l\}$, where $l$ is the length of $G$.  Since the length of a finite group $G$ is bounded by the number of prime factors of $|G|$, every finite group has finite base size set and finite determining set.

The following are slight generalizations of Lemma 11 and Lemma 18 in \cite{Gibbons09}.

\begin{lemma} Suppose $G$ is a finite group acting faithfully on a set $S$, and
$g \in G$ is an element of order $p^k$, for $p$ prime and $k$ a positive integer.  Then there exists a set of $p^k$ elements
$x_1, \ldots, x_{p^k}$ in $S$ such that, as a permutation of the elements of $S$, $g$ contains the cycle $(x_1 \ldots x_{p^k})$.
\label{order_pk} \end{lemma}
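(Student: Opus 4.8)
The plan is to analyze the disjoint cycle decomposition of $g$ acting on $S$ and to show that the prime-power hypothesis forces one of those cycles to have length exactly $p^k$. Writing $g$ as a product of disjoint cycles on the elements of $S$, I would first invoke the standard fact that the order of $g$ as a permutation equals the least common multiple of its cycle lengths, and that consequently each individual cycle length divides $p^k$.

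Since the divisors of $p^k$ are precisely $1, p, p^2, \ldots, p^k$, every cycle of $g$ has length $p^j$ for some $0 \le j \le k$. The key observation is then that the least common multiple of a collection of powers of a single prime $p$ is simply the largest such power that occurs. Because this lcm must equal the order $p^k$, at least one cycle must have length exactly $p^k$. Labelling the elements of that cycle $x_1, \ldots, x_{p^k}$ in the order in which $g$ permutes them then produces the required cycle $(x_1 \ldots x_{p^k})$, completing the argument.

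The argument is short and I do not expect a serious obstacle; essentially all of the content sits in the lcm observation of the second paragraph. It is worth emphasizing that the hypothesis that the order of $g$ be a prime power is exactly what makes the proof work, and this is the point I would be most careful to flag: for an element whose order has two distinct prime factors the conclusion can fail, since the required lcm can be assembled from several shorter cycles of coprime lengths (for instance, an element of order $6$ may act as a $2$-cycle times a $3$-cycle, with no cycle of length $6$). Restricting to orders $p^k$ rules this out precisely because the powers of a single prime are totally ordered by divisibility, so their lcm is attained by a single term.
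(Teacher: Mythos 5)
Your proof is correct and follows essentially the same approach as the paper, which simply asserts in one line that the cycle decomposition of an element of order $p^k$ must contain a cycle of length $p^k$; you have supplied the standard lcm-of-cycle-lengths justification (including the apt remark that the prime-power hypothesis is what makes it work) for exactly that assertion.
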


\begin{proof}
Since $g$ has order $p^k$, as a permutation of $S$ the cycle decomposition of $g$ must
include a cycle of length $p^k$.  Label these elements $x_1, \ldots, x_{p^k}$.
\end{proof}

\begin{corollary}
Suppose $G$ is a finite group and $g \in G$ is an element of order $p^k$, for $p$ prime and $k$ a positive integer.  Let $j$ be the number of prime factors of $|G|/p^k$, counting multiplicities.  Then the largest element in $\bs(G)$ is at most $j+1$. \label{orbit_pk}
\end{corollary}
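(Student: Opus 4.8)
The plan is to prove the equivalent statement that \emph{every} faithful action of $G$ on a finite set $S$ has a base of size at most $j+1$; since the largest element of $\bs(G)$ is the maximum, over all such actions, of the minimum base size, this bounds all of $\bs(G)$ by $j+1$. So I fix an arbitrary faithful action of $G$ on $S$ and try to exhibit a base of size at most $j+1$.

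First I would invoke Lemma~\ref{order_pk}: as a permutation of $S$, $g$ contains a cycle $(x_1\,\ldots\,x_{p^k})$. I take $x_1$ as the first base point and set $H=\stab(x_1)$. Since the $\langle g\rangle$-orbit of $x_1$ is all of $\{x_1,\ldots,x_{p^k}\}$, a set of size $p^k=|\langle g\rangle|$, the group $\langle g\rangle$ meets $H$ trivially; counting the elements of the product set $\langle g\rangle H\subseteq G$ then gives $|H|\le |G|/p^k$. The subgroup $H$ acts faithfully on $S$, so by the chain-of-stabilizers bound of Section~\ref{general} it has a base $b_2,\ldots,b_m$, and $\{x_1,b_2,\ldots,b_m\}$ is then a base of $G$, its pointwise stabilizer being $\stab_H(b_2,\ldots,b_m)=\{e\}$. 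Everything reduces to showing that $H$ admits a base of size at most $j$.

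This final reduction is the crux, and it is more subtle than it first appears. The tempting estimate ``$H$ has a base of size at most the number of prime factors of $|H|$, which is at most $j$ because $|H|\le|G|/p^k$'' is not valid: the number of prime factors is not monotone in the integers, and $|H|$ need only divide $|G|$, not $|G|/p^k$. Indeed the $G$-orbit $\Orb(x_1)$ can have prime size larger than $p^k$ (for example $G=A_7$ acting on $\{1,\ldots,7\}$ with $g=(1\,2\,3\,4)(5\,6)$ has $|H|=360$, which has six prime factors, while $j=5$), so the descent $G>H$ need not ``absorb'' the full $p$-power of $g$. The estimate does go through cleanly whenever $\Orb(x_1)$ has at least $k$ prime factors---in particular when $p^k\mid|\Orb(x_1)|$, and also when $p\nmid|\Orb(x_1)|$, since then $H$ contains a full Sylow $p$-subgroup of $G$ and hence an element of order $p^k$. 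To handle the remaining cases I would set the argument up as an induction on $|G|$: when $H\neq\{e\}$ I would re-apply Lemma~\ref{order_pk} to a prime-power-order element of $H$ and peel off one further base point, so that the goal becomes to check that the prime-factor budget decreases by enough at each stage for the recursion to close within $j$ points. Verifying that bookkeeping---equivalently, guaranteeing that the stabilizer always inherits a prime-power element of sufficiently large order---is the step I expect to be the main obstacle.
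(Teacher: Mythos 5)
Your proposal is not a complete proof, and the gap is exactly where you say it is: after choosing $x_1$ in the $p^k$-cycle of $g$ and reducing to the claim that $H=\stab(x_1)$ has a base of size at most $j$ (this much coincides step-for-step with the paper's argument), you leave the claim as an induction whose ``bookkeeping'' you explicitly decline to verify. The missing piece can, in fact, be supplied, and it is the Sylow observation your sketch circles around. Writing $\Omega(n)$ for the number of prime factors of $n$ counted with multiplicity (so $j=\Omega(|G|/p^k)$) and $p^t$ for the $p$-part of $|\Orb(x_1)|=[G:H]$: choose a Sylow $p$-subgroup $P$ of $G$ with $P\cap H$ Sylow in $H$; since all Sylow $p$-subgroups are conjugate, $P$ contains an element $g'$ of order $p^k$, and $[\langle g'\rangle:\langle g'\rangle\cap H]\le[P:P\cap H]=p^t$ shows $H$ contains an element of order $p^{k-t}$. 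If $t\ge k$, then $\Omega([G:H])\ge k$ and the crude length bound of Section~\ref{general} gives a base of $H$ of size at most $\Omega(|H|)\le\Omega(|G|)-k=j$. If $t<k$, then since $|\Orb(x_1)|\ge p^k>p^t$ we get $\Omega([G:H])\ge t+1$, and induction on $|G|$ applied to $H$ (which acts faithfully on $S$ and has an element of order $p^{k-t}$) yields a base of size at most $\Omega(|H|)-(k-t)+1\le(\Omega(|G|)-t-1)-(k-t)+1=j$. So your plan does close, but you did not close it, and without this step the proposal proves nothing.

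That said, your diagnosis deserves credit, because the step you balked at is the same step the paper waves through: its entire justification for ``the base size of $\stab(x_1)$ acting on $S$ is at most $j$'' is the inequality $|\stab(x_1)|\le|G|/p^k$ together with the prime-factor bound of Section~\ref{general}, and your $A_7$ computation is a correct counterexample to that inference when $k\ge2$. For $g=(1\,2\,3\,4)(5\,6)$ in the natural action, $|\stab(x_1)|=|A_6|=360$ has $\Omega(360)=6>5=\Omega(630)=j$: Lemma~\ref{order_pk} guarantees only $|\Orb(x_1)|\ge p^k$, not $\Omega(|\Orb(x_1)|)\ge k$, and $\Omega$ is monotone under divisibility but not under $\le$. (The corollary's conclusion survives there, since the induced action of $A_6$ has a base of size $4$, but the stated reasoning does not establish it.) For $k=1$ the paper's argument is airtight, since any orbit of size at least $p\ge2$ contributes at least one prime factor, and in every application made in the paper ($j=0$ for $\mathbb{Z}_{p^\alpha}$; $|G|/p^k=2$ for $D_{p^k}$; $|G|/p^k=4$ for $D_{2p^k}$) the step is harmless because every group of order at most $4$ has length at most $2$. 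So the verdict is double-edged: as written your proof has a genuine gap, but you correctly located a real lacuna in the paper's own proof for $k\ge2$, and the Sylow repair above is what both arguments need.
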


\begin{proof}
Suppose $G$ is acting on a finite set $S$, and define $g$ and $x_1$ as in Lemma~\ref{order_pk}.  By Lemma~\ref{order_pk}, the orbit of $x_1$ has at least $p^k$ elements.  By the Orbit-Stabilizer Theorem, $\stab(x_1)$ has at most $|G|/p^k$ elements.  If $B$ is a base of the induced action of $\stab(x_1)$ on $S$, then $B \cup \{x_1\}$ is a base of the action of  $G$ on $S$. Since the base size of $\stab(x_1)$ acting on $S$ is at most $j$, the base size of $G$ acting on $S$ is at most $j+1$.  Since $S$ is arbitrary, all base sizes of $G$ acting on any set are at most $j+1$.
\end{proof}

\section{Base size sets and determining sets of finite abelian groups}\label{Abelian-section}

In this section we characterize $\bs(G)$ for all finite abelian groups, generalizing the analogous characterization of $\ds(G)$ in \cite{Gibbons09}.  By the Fundamental Theorem of Finite Abelian Groups, a finite abelian group $G$ can be expressed as $$G\cong\mathbb{Z}_{p_1^{\alpha_1}}\oplus\mathbb{Z}_{p_2^{\alpha_2}}\oplus\cdots\oplus\mathbb{Z}_{p_n^{\alpha_n}},$$  where the numbers $p^{\alpha_i}_i$ are called the elementary divisors of $G$.

\begin{theorem}\label{abelian_bss}
Given a finite abelian group $G$ with $n$ elementary divisors, $\bs(G)=\ds(G)=\{1,2,\ldots,n\}$.
\end{theorem}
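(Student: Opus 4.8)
The plan is to establish the theorem by sandwiching all three sets between $\{1,\ldots,n\}$ and itself. The inclusion $\ds(G)\subseteq\bs(G)$ is immediate from the definitions, and the inclusion $\{1,\ldots,n\}\subseteq\ds(G)$ is exactly the characterization of $\ds(G)$ for finite abelian groups proved by Gibbons and Laison in \cite{Gibbons09}, which I would invoke directly (their explicit graph constructions realize each determining number $k\in\{1,\ldots,n\}$). Thus the whole theorem reduces to the single new inclusion $\bs(G)\subseteq\{1,\ldots,n\}$: once this is in hand, $\{1,\ldots,n\}\subseteq\ds(G)\subseteq\bs(G)\subseteq\{1,\ldots,n\}$ forces equality throughout. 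Note that the general length bound of Section~\ref{general}, and even Corollary~\ref{orbit_pk}, only deliver a bound of order $\sum_i\alpha_i$ coming from the number of prime factors of $|G|$ with multiplicity, so a sharper argument exploiting commutativity is required.

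For the upper bound, I would first convert bases into intersections of stabilizers. Suppose $G$ acts faithfully on a finite set $S$ and decompose $S$ into orbits $O_1,\ldots,O_t$. Because $G$ is abelian, $\stab(g\cdot x)=g\,\stab(x)\,g^{-1}=\stab(x)$, so the stabilizer is constant on each orbit; call it $H_j$ on $O_j$. A subset $B\subseteq S$ is a base exactly when $\bigcap_{x\in B}\stab(x)=e$, and since adding a second point from an orbit already represented contributes nothing, the base size equals the minimum of $|T|$ over index sets $T\subseteq\{1,\ldots,t\}$ with $\bigcap_{j\in T}H_j=e$; faithfulness guarantees $\bigcap_j H_j=e$. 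So it suffices to show that any family of subgroups of $G$ with trivial intersection admits a subfamily of size at most $n$ with trivial intersection.

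Next I would split by primes and pass to the socle. Writing $G=\bigoplus_p G_p$ for the primary decomposition, every subgroup factors as $H=\bigoplus_p(H\cap G_p)$, so $\bigcap_{j\in T}H_j=e$ if and only if $\bigcap_{j\in T}(H_j\cap G_p)=e$ in each $G_p$. Within a fixed $G_p$ — an abelian $p$-group with, say, $n_p$ cyclic factors, where $\sum_p n_p=n$ — I would intersect each $H_j\cap G_p$ with the socle $V=G_p[p]\cong\mathbb{F}_p^{\,n_p}$. The key facts are that a subgroup of a $p$-group is trivial if and only if it meets the socle trivially, and that intersection commutes with meeting the socle; hence the subspaces $U_j=(H_j\cap G_p)\cap V$ of the $n_p$-dimensional space $V$ satisfy $\bigcap_j U_j=0$. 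A greedy dimension-drop argument then selects at most $n_p$ of them: starting from $W=V$, as long as $W\ne 0$ some $U_j$ fails to contain $W$, and replacing $W$ by $W\cap U_j$ strictly lowers $\dim W$, so the process terminates within $n_p$ steps. Translating back, at most $n_p$ of the subgroups $H_j\cap G_p$ already intersect trivially. Taking the union of these index sets over all primes yields a subfamily of total size at most $\sum_p n_p=n$ that certifies triviality in every primary component simultaneously, hence in $G$. This gives base size at most $n$ and completes $\bs(G)\subseteq\{1,\ldots,n\}$.

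The main obstacle is the interaction between distinct primes: the chosen subfamily of orbit stabilizers must witness triviality \emph{simultaneously} in every $G_p$, so one cannot naively optimize prime by prime and hope the selected index sets coincide. The device that resolves this is precisely the primary decomposition together with the socle reduction, which decouples the primes — letting each contribute an independent budget of $n_p$ — while the linear-algebra step controls each prime with the sharp count $n_p$ rather than the exponent-dependent multiplicities. The two supporting facts (that subgroups of finite abelian groups split over primes, and that a $p$-subgroup is trivial iff its socle is) are routine, so the only genuinely quantitative input is the dimension-drop lemma, which is exactly what pins the bound at $n=\sum_p n_p$.
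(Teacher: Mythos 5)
Your proof is correct, but it takes a genuinely different route from the paper's. The paper proves $\bs(G)\subseteq\{1,\ldots,n\}$ by induction on the number of elementary divisors: it picks $g_1$ of order $p_1^{\alpha_1}$, uses Lemma~\ref{order_pk} to find a point $x_1$ lying in a cycle of full length $p_1^{\alpha_1}$ (so that $\langle g_1\rangle\cap\stab(x_1)=e$), and then applies the Second and Third Isomorphism Theorems to embed $\stab(x_1)$ into $G/\langle g_1\rangle\cong\mathbb{Z}_{p_2^{\alpha_2}}\oplus\cdots\oplus\mathbb{Z}_{p_n^{\alpha_n}}$, so that $\stab(x_1)$ has at most $n-1$ elementary divisors and the inductive hypothesis bounds its base by $n-1$. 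You instead argue globally and non-inductively: commutativity makes stabilizers constant on orbits, so the minimum base size is exactly the least number of orbit stabilizers with trivial intersection; splitting each stabilizer over the primary decomposition and cutting down to the socle $G_p[p]\cong\mathbb{F}_p^{\,n_p}$ converts the problem into linear algebra, where your greedy dimension-drop selects at most $n_p$ stabilizers per prime, for a total of $\sum_p n_p=n$. All the supporting facts you invoke check out (stabilizers of points in one orbit coincide for abelian $G$; subgroups of finite abelian groups split as direct sums of their primary parts; a nontrivial subgroup of an abelian $p$-group meets the socle nontrivially; intersection commutes with intersecting against $V$), and the outer sandwich step, citing \cite{Gibbons09} for $\ds(G)=\{1,\ldots,n\}$ together with $\ds(G)\subseteq\bs(G)$, is identical to the paper's. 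The trade-off: the paper's induction is shorter given that Lemma~\ref{order_pk} is already in place, and it reuses the stabilize-a-point-in-a-long-cycle technique that recurs elsewhere in the paper; however, it quietly relies on the standard fact that a subgroup of an abelian group with $n-1$ elementary divisors has at most $n-1$ elementary divisors, which is essentially equivalent to the socle-dimension count you make explicit. Your version isolates precisely where commutativity enters, decouples the primes so each contributes an independent budget of $n_p$, and yields the slightly sharper structural byproduct that for any faithful action of a finite abelian group the base size equals the minimum number of orbits whose common stabilizer is trivial.
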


\begin{proof}
We first prove that $\mathcal{B}(G)$ is contained in $\{1,2,\ldots,n\}$.
Consider the base case $n=1$, i.e. $G\cong\mathbb{Z}_{p^{\alpha}}$.  By Corollary~\ref{orbit_pk}, $\mathcal{B}(G)=\{1\}$.

Now suppose by way of induction that for all $k<n$, the largest element in the base size set of a finite abelian group with $k$ elementary divisors is less than or equal to $k$, i.e.  $\mathcal{B}(G)\subseteq\{1,2,\ldots,k\}$ .  Let $G$ be a finite abelian group with $n$ elementary divisors.  Thus $G$ can be expressed as $$G\cong\mathbb{Z}_{p_1^{\alpha_1}}\oplus\mathbb{Z}_{p_2^{\alpha_2}}\oplus\cdots\oplus\mathbb{Z}_{p_n^{\alpha_n}}.$$
For each elementary divisor, there is an element $g_i$ of $G$ of order $p^{\alpha_i}_i$.  Let $H\cong\mathbb{Z}_{p^{\alpha_1}_1}$ be the subgroup of $G$ generated by $g_1$.  Under any faithful action of $G$, $g_1$ is mapped to a permutation $\phi(g_1)\in G_P$ of order $p^{\alpha_1}_1$.  Given the order of $\phi(g_1)$, by Lemma~\ref{order_pk}, its disjoint cycle representation must contain a subcycle $(x_1x_2\ldots x_{p^{\alpha_1}_1})$.  Let $B$ be a base of the induced action of $\stab(x_1)$.  It follows that $B\cup\{x_1\}$ is a base of the action of $G$, and the base size of this action is $b(G_P)\le|B|+1$.  Since for all $1\le m< p^{\alpha_1}_1$, $g_1^m$ maps $x_1$ to $x_{m+1}\not=x_1$, $H\cap \stab(x_1)=e$.  Thus, by the Second and Third Isomorphism Theorems
$$\mathbb{Z}_{p_2^{\alpha_2}}\oplus\cdots \oplus \mathbb{Z}_{p_n^{\alpha_n}}\cong G/H\ge \stab(x_1)H/H\cong \stab(x_1).$$  Since $\stab(x_1)$ has at most $n-1$ elementary divisors, by the induction hypothesis $\mathcal{B}(\stab(x_1))\subseteq\{1,2,\ldots,n-1\}$ and $|B|\le n-1$.  Thus for any faithful action $G_P,$ $b(G_P)\le n$ and $\mathcal{B}(G)\subseteq\{1,2,\ldots,n\}$.

In \cite{Gibbons09}, Gibbons and Laison proved that $\mathcal{D}(G)=\{1,2,\ldots,n\}$ for a finite abelian group $G$ with $n$ elementary divisors.  Since $\mathcal{D}(G)\subseteq\mathcal{B}(G)$, it follows that for finite abelian groups $\bs(G)=\ds(G)=\{1,2,\ldots,n\}$.
\end{proof}

Note that $ \mathbb{Z}_{p^{\alpha}} > \mathbb{Z}_{p^{\alpha-1}} > \cdots > e$ is a maximum chain of subgroups of $\mathbb{Z}_{p^{\alpha}}$ and thus $\mathbb{Z}_{p^{\alpha}}$ has length $\alpha$.  By Theorem \ref{abelian_bss}, $\bs(\mathbb{Z}_{p^{\alpha}})=\{1\}$, therefore $\mathbb{Z}_{p^{\alpha}}$ is an example of a family of groups for which $|\bs(G)|$ is arbitrarily far from the length of $G$.


\section{Base size sets and determining sets of dihedral groups} \label{dihedral-section}

In this section we characterize $\bs(G)$ and $\ds(G)$ for several families of dihedral groups, and find dihedral groups for which $\ds(G)\not=\bs(G).$  The following lemma was proved in \cite{Gibbons09}.

\begin{lemma}
For all positive integers $n \geq 2$, $\{1,2\} \subseteq \ds(D_n)$. \label{dihedral-constructions}
\end{lemma}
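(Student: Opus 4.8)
The plan is to establish the two containments $1 \in \ds(D_n)$ and $2 \in \ds(D_n)$ by exhibiting, for each, an explicit finite graph $\Gamma$ with $\Aut(\Gamma) = D_n$ whose determining number (the base size of this action on $V(\Gamma)$) is the desired value. First I would dispose of the degenerate case $n = 2$: here $D_2 \cong \mathbb{Z}_2 \oplus \mathbb{Z}_2$ has two elementary divisors, so Theorem~\ref{abelian_bss} already gives $\ds(D_2) = \{1,2\}$. Hence I may assume $n \ge 3$, where $\Aut(C_n) = D_n$ is available as a building block.

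For $2 \in \ds(D_n)$ I would use the cycle graph $C_n$ itself. Its automorphism group is $D_n$ acting on the $n$ vertices, so it remains to check that its determining number equals $2$. No single vertex is a base: the stabilizer of any vertex $v$ contains the reflection whose axis passes through $v$, a nonidentity automorphism. On the other hand, fixing two adjacent vertices $v_0, v_1$ forces every automorphism to fix the oriented edge $v_0 v_1$ and hence to be the identity, so $\{v_0, v_1\}$ is a base. Thus the determining number of $C_n$ is exactly $2$, giving $2 \in \ds(D_n)$.

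The substance of the lemma is $1 \in \ds(D_n)$, which requires a graph $\Gamma$ with $\Aut(\Gamma) = D_n$ admitting a \emph{regular} orbit, i.e.\ a vertex $v$ with $\stab(v) = e$; by the Orbit--Stabilizer Theorem such a $v$ lies in an orbit of size $|D_n| = 2n$ on which $D_n$ acts freely. The obstruction compared with the previous case is precisely that in the natural cyclic realizations every reflection fixes a vertex. To avoid this I would start from the free action $r\colon a_i \mapsto a_{i+1},\, b_i \mapsto b_{i-1}$ and $s\colon a_i \leftrightarrow b_i$ on the $2n$ symbols $\{a_i, b_i : i \in \mathbb{Z}_n\}$, which realizes $D_n$ as its regular representation, and take a $D_n$-invariant edge set such as the two cycles $a_i a_{i+1}$, $b_i b_{i+1}$ together with the twisted matching $a_i b_{-i}$. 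This produces a cubic graph on which $D_n$ acts freely; the remaining task is to pin the automorphism group down to exactly $D_n$.

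The hard part is this last step: cubic vertex-transitive graphs (M\"obius ladders, prisms, and their relatives) typically carry extra automorphisms, so the bare twisted prism will have $|\Aut| > 2n$. I would remedy this by attaching to each edge (or each vertex) a rigid asymmetric gadget, chosen so that it respects the $D_n$-action but is fixed by no other permutation of the symbols; equivalently, one may invoke the constructive form of Frucht's theorem, which realizes any finite group as the automorphism group of a graph via its Cayley graph with rigid edge-gadgets, and in which the group vertices form a regular orbit. Either route yields a graph with $\Aut(\Gamma) = D_n$ and a vertex of trivial stabilizer, so $1 \in \ds(D_n)$. The main obstacle, and the place where genuine care is needed, is verifying that the decoration kills all extra automorphisms without creating new ones, together with checking the finitely many small values of $n$ (such as $n = 3, 4, 5$) by hand, since these are exactly the cases where dihedral groups are known to behave exceptionally for regular representations.
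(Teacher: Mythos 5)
Your proposal is correct and ultimately follows the paper's own route: the cycle graph $C_n$ gives base size $2$, and the Frucht construction (whose regular orbit contains a vertex with trivial stabilizer) gives base size $1$. The detour through twisted prisms is unnecessary once you invoke Frucht's theorem, and your separate treatment of $n=2$ via Theorem~\ref{abelian_bss} is a careful touch the paper glosses over.
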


\begin{proof}
Every non-trivial group $G$ is the automorphism group of a corresponding Frucht graph, with base size 1.  Furthermore, the group $D_n$ is the automorphism group of the cycle graph $C_n$, with base size 2.
\end{proof}

\begin{proposition}
For any prime $p$ and positive integer $k$, $\ds(D_{p^k})=\bs(D_{p^k})=\{1,2\}$.
\end{proposition}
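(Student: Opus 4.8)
The plan is to sandwich both $\bs(D_{p^k})$ and $\ds(D_{p^k})$ between $\{1,2\}$ and $\{1,2\}$. The lower containment is already in hand: Lemma~\ref{dihedral-constructions} gives $\{1,2\}\subseteq\ds(D_{p^k})$, and since $\ds(G)\subseteq\bs(G)$ by definition, also $\{1,2\}\subseteq\bs(D_{p^k})$. So the entire content of the proposition is the reverse containment $\bs(D_{p^k})\subseteq\{1,2\}$, i.e. that no faithful action of $D_{p^k}$ has base size $3$ or larger.

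I would obtain this upper bound in a single stroke from Corollary~\ref{orbit_pk}. The group $D_{p^k}$ has order $2p^k$, and its rotation subgroup is cyclic of order $p^k$, so $D_{p^k}$ contains an element $g$ of order exactly $p^k$. Applying Corollary~\ref{orbit_pk} to this $g$, we compute $|G|/p^k = 2p^k/p^k = 2$, and the number of prime factors of $2$, counting multiplicities, is $j=1$. The corollary then states that the largest element of $\bs(D_{p^k})$ is at most $j+1 = 2$, giving $\bs(D_{p^k})\subseteq\{1,2\}$.

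Combining the two containments with $\ds(D_{p^k})\subseteq\bs(D_{p^k})$ yields
$$\{1,2\}\subseteq\ds(D_{p^k})\subseteq\bs(D_{p^k})\subseteq\{1,2\},$$
which forces $\ds(D_{p^k})=\bs(D_{p^k})=\{1,2\}$.

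There is essentially no obstacle here; the only point worth flagging is that the argument is uniform across all primes $p$, including $p=2$. The relevant quotient $|G|/p^k$ equals $2$ regardless of the prime, so $j=1$ in every case and the bound of $2$ never degrades. The single thing to get right is identifying the order-$p^k$ rotation and observing that $|G|/p^k=2$ contributes only one prime factor.
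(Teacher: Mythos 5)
Your proof is correct and is essentially identical to the paper's: both sandwich the sets via Lemma~\ref{dihedral-constructions} for the lower containment and Corollary~\ref{orbit_pk} applied to the order-$p^k$ rotation (with $|G|/p^k=2$, so $j=1$) for the upper bound. You simply spell out the computation of $j$ that the paper leaves implicit.
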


\begin{proof}
Since $\ds(D_{p^k}) \subseteq \bs(D_{p^k})$, it follows by Lemma~\ref{dihedral-constructions} and Corollary~\ref{orbit_pk} that $\{1,2\} \subseteq \ds(D_{p^k})\subseteq\bs(D_{p^k}) \subseteq \{1,2\}$.
\end{proof}

\begin{proposition}
For any odd prime $p$, $\ds(D_{2p^k})=\bs(D_{2p^k})=\{1,2,3\}$.
\end{proposition}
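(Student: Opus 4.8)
The plan is to sandwich both $\bs(D_{2p^k})$ and $\ds(D_{2p^k})$ between $\{1,2,3\}$ on each side. First I would establish the upper bound $\bs(D_{2p^k})\subseteq\{1,2,3\}$ directly from Corollary~\ref{orbit_pk}. The group $D_{2p^k}$ has order $4p^k$, and its rotation subgroup contains an element $r$ of order $2p^k$; since $p$ is odd, $r^2$ has order $p^k$. Applying Corollary~\ref{orbit_pk} to $g=r^2$, we get $|G|/p^k = 4 = 2^2$, which has $j=2$ prime factors counted with multiplicity, so every element of $\bs(D_{2p^k})$ is at most $j+1 = 3$.

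For the lower bound, since $\ds(D_{2p^k})\subseteq\bs(D_{2p^k})$, it suffices to show $\{1,2,3\}\subseteq\ds(D_{2p^k})$. Lemma~\ref{dihedral-constructions} immediately supplies $\{1,2\}\subseteq\ds(D_{2p^k})$ (taking $n=2p^k\geq 2$). The remaining and main task is to exhibit a finite graph $\Gamma$ with $\Aut(\Gamma)\cong D_{2p^k}$ whose determining number is exactly $3$.

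The key structural observation is that, because $p^k$ is odd, $D_{2p^k}\cong D_{p^k}\times\mathbb{Z}_2$: the rotation $r^{p^k}$ is a central involution generating a $\mathbb{Z}_2$ that meets the index-two dihedral subgroup $\langle r^2, s\rangle\cong D_{p^k}$ trivially, so the group is an internal direct product. This suggests building $\Gamma$ as a disjoint union $C_{p^k}\sqcup H$, where $C_{p^k}$ is the cycle (with $\Aut(C_{p^k})=D_{p^k}$ since $p^k\geq 3$) and $H$ is a small graph with $\Aut(H)\cong\mathbb{Z}_2$, for instance $H=K_2$ or $H=P_3$. Because $C_{p^k}$ and $H$ share no isomorphic connected component, we have $\Aut(\Gamma)=\Aut(C_{p^k})\times\Aut(H)\cong D_{p^k}\times\mathbb{Z}_2\cong D_{2p^k}$, giving a faithful action lying in $\mathcal{A}'$. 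For the determining number I would use that the pointwise stabilizer of any $B\subseteq V(\Gamma)$ factors as $\stab(B\cap V(C_{p^k}))\times\stab(B\cap V(H))$, so base size is additive across the two components: one needs at least $2$ vertices on the cycle (the determining number of $C_{p^k}$ is $2$) and at least $1$ vertex on $H$ (the determining number of $K_2$ or $P_3$ is $1$), for a total of exactly $3$.

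I expect the main obstacle to be the careful verification of the two structural claims rather than any hard computation. First, one must confirm that the disjoint-union automorphism group really is the direct product $\Aut(C_{p^k})\times\Aut(H)$, which relies on the non-isomorphism of the components. Second, one must confirm that this direct product is genuinely isomorphic to $D_{2p^k}$, and this is precisely where the hypothesis that $p$ is odd is essential, since for even $m$ the group $D_{2m}$ does not split as $D_m\times\mathbb{Z}_2$. Assembling these facts yields $\{1,2,3\}\subseteq\ds(D_{2p^k})\subseteq\bs(D_{2p^k})\subseteq\{1,2,3\}$, and hence equality throughout.
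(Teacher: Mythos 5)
Your proposal is correct and takes essentially the same route as the paper: the paper also sandwiches both sets via Lemma~\ref{dihedral-constructions} and Corollary~\ref{orbit_pk}, and realizes base size $3$ with the very same graph, the disjoint union of $P_2$ (your $K_2$) and $C_{p^k}$. Your write-up merely fills in the details the paper leaves implicit, namely the decomposition $D_{2p^k}\cong D_{p^k}\times\mathbb{Z}_2$ for odd $p$ and the additivity of base size over non-isomorphic components.
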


\begin{proof}
By Lemma~\ref{dihedral-constructions} and Corollary~\ref{orbit_pk}, $\{1,2\} \subseteq \ds(D_{2p^k})\subseteq\bs(D_{2p^k}) \subseteq \{1,2,3\}$.
Let $\Gamma$ be the disjoint union of the path graph $P_2$ with two vertices and the cycle graph $C_{p^k}$.  $D_{2p^k}$ is the automorphism group of this graph, with base size 3.  Thus $\ds(D_{2p^k})=\bs(D_{2p^k})=\{1,2,3\}$.
\end{proof}

\begin{figure}[h]
\centering
\includegraphics[width=4in]{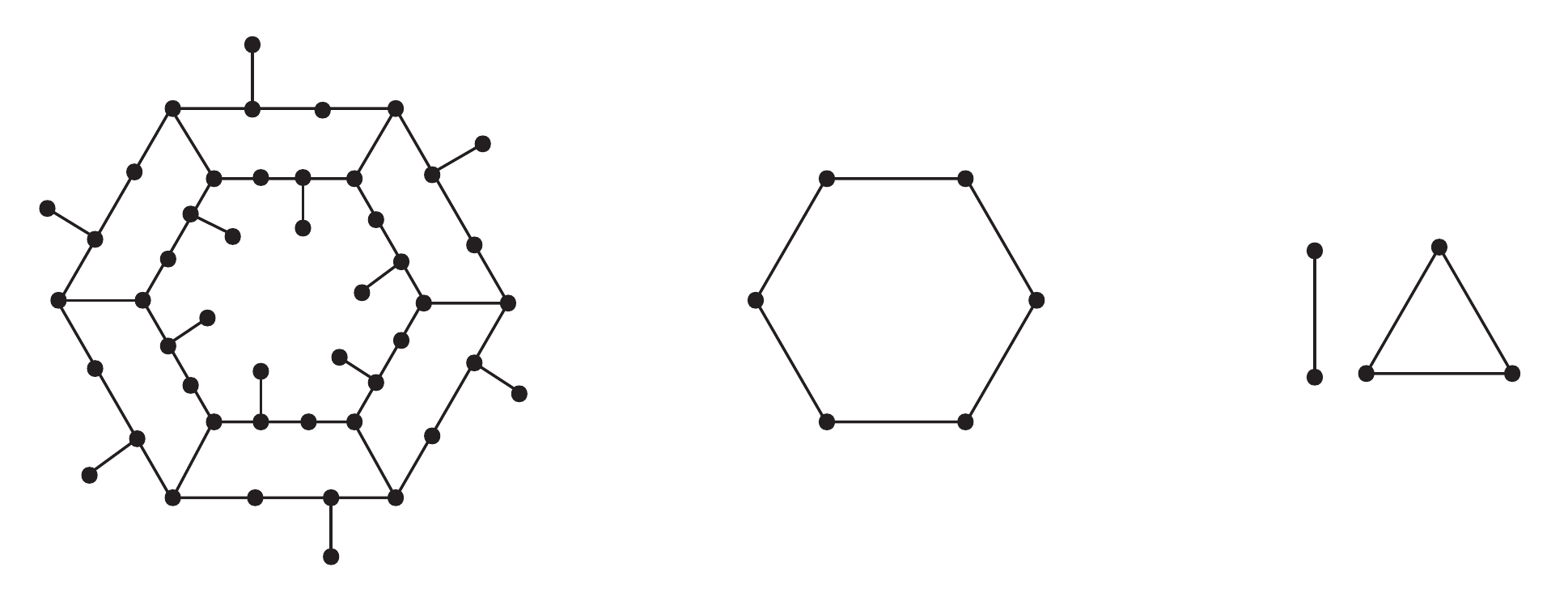}
\caption{Graphs having $\text{Aut}(\Gamma)\cong D_6$ and determining numbers $1$, $2$, and $3$, respectively.}
\label{D6-graphs}
\end{figure}


We conclude this section with a family of dihedral groups for which $\bs(G)\neq\ds(G)$.  In particular, we find that for distinct odd primes $p$ and $q$, $3$ is an element of $\bs(D_{pq})$ but not $\ds(D_{pq})$.

\begin{proposition}
For distinct odd primes $p$ and $q$, $\bs(D_{pq})$ contains 3. \label{D15}
\end{proposition}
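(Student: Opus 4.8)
The plan is to exhibit a single faithful action of $D_{pq}$ whose base size is exactly $3$; since Corollary~\ref{orbit_pk} already gives $\bs(D_{pq}) \subseteq \{1,2,3\}$ (apply it to a rotation of order $p$, so that $j=2$ and the largest element is at most $j+1 = 3$), producing such an action suffices. Write $D_{pq} = \langle r, s \mid r^{pq} = s^2 = e,\ srs = r^{-1}\rangle$. I would let $D_{pq}$ act on the disjoint union $S = \mathbb{Z}_p \sqcup \mathbb{Z}_q$, where $r$ acts as $x \mapsto x+1$ on each summand (modulo $p$ and modulo $q$ respectively) and $s$ acts as $x \mapsto -x$ on each. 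Concretely this realizes $D_{pq}$ through its two dihedral quotients $D_{pq} \twoheadrightarrow D_p$ and $D_{pq} \twoheadrightarrow D_q$, acting on the vertices of a $p$-gon and a $q$-gon.

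First I would check that the action is well defined and faithful: the image of $r$ acts with order $\mathrm{lcm}(p,q) = pq$, the relation $srs = r^{-1}$ holds on each summand, and an element acting trivially on both summands must lie in $\langle r^p\rangle \cap \langle r^q\rangle = \{e\}$, because these rotation subgroups have coprime orders $q$ and $p$.

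The core of the argument is to show that no two-element subset of $S$ is a base, forcing the base size to exceed $2$. I would treat three cases. For two distinct points of $\mathbb{Z}_p$ the common stabilizer is exactly $\langle r^p\rangle$ (order $q$), and symmetrically two points of $\mathbb{Z}_q$ leave $\langle r^q\rangle$ (order $p$) fixed; in both cases the stabilizer is nontrivial. The decisive case is one point $a \in \mathbb{Z}_p$ together with one point $b \in \mathbb{Z}_q$: a rotation fixing both must be the identity (its index would have to be divisible by $pq$), but for a reflection $g = r^i s$ the fixing conditions read $i \equiv 2a \pmod p$ and $i \equiv 2b \pmod q$, and by the Chinese Remainder Theorem (using $\gcd(p,q)=1$) there is a unique $i \bmod pq$ solving both. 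Thus every mixed pair is fixed by a genuine reflection and so is not a base. I expect this mixed case to be the main obstacle, since it is precisely where the distinctness of $p$ and $q$ enters, and it is what separates this action from the base-size-$2$ action on the $pq$-cycle.

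Finally I would verify that some three-element subset is a base, pinning the base size at exactly $3$. Taking two distinct points $a_1, a_2 \in \mathbb{Z}_p$ and any $b \in \mathbb{Z}_q$, the common stabilizer of $a_1$ and $a_2$ is already contained in $\langle r^p\rangle$, and requiring such a rotation to fix $b \in \mathbb{Z}_q$ forces its index to be divisible by $q$ as well, hence it is the identity. Therefore $\{a_1, a_2, b\}$ is a base, the action has base size $3$, and $3 \in \bs(D_{pq})$.
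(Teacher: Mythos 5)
Your proposal is correct and takes essentially the same route as the paper: you construct the identical faithful action of $D_{pq}$ on the disjoint union of a $p$-gon and a $q$-gon (the paper writes it as the permutation group generated by $r=(x_1\ldots x_p)(x_{p+1}\ldots x_{p+q})$ and a reflection $f$), and your key step --- using the Chinese Remainder Theorem to find a reflection fixing any mixed pair $a\in\mathbb{Z}_p$, $b\in\mathbb{Z}_q$ --- is exactly the paper's argument via the conjugate $r^kfr^{-k}$. The only cosmetic difference is that you certify the upper bound by exhibiting an explicit three-element base $\{a_1,a_2,b\}$, where the paper instead cites the length bound $|B|\le 3$ from Section~\ref{general}; both are valid.
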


\begin{proof}
Consider the permutation representation of the group $D_{pq}$ generated by the
permutations $$r=(x_1x_2\ldots x_p)(x_{p+1}x_{p+2}\ldots x_{p+q})$$ and $$f=\prod_{i=1}^{\frac{p+1}{2}}(x_ix_{p-i})\prod_{j=1}^{\frac{q-1}{2}}(x_{p+j}x_{p+q-j})$$
acting on the set
$S=\{x_1, x_2, \ldots,x_{p+q}\}$.  We will show that this permutation group has base size 3.

Let $B$ be an arbitrary base of this permutation representation.  By the discussion at the beginning of Section \ref{general}, the size of $B$ is bounded above by the length of $D_{pq}$.  Thus $|B|\le 3$.  Since $r^q$ is a $p$-cycle on the elements of $\{x_1, x_2, \ldots, x_p\}$, and $r^p$ is a $q$-cycle on the elements of $\{x_{p+1}, x_{p+2},\ldots,x_{p+q}\}$, every base of this permutation representation of $D_{pq}$ must contain one element of $\{x_1, x_2, \ldots, x_p\}$ and one element of
$\{x_{p+1}, x_{p+2}, \ldots,x_{p+q}\}$.  Let $x_i \in \{x_1, x_2, \ldots, x_p\}$ and $x_{p+j} \in \{x_{p+1}, x_{p+2}, \ldots,x_{p+q}\}$ be elements of $B$.  By the Chinese Remainder Theorem, there exists a unique integer $k$ between 1 and $pq$ such that $k \equiv i \mod p$ and $k \equiv j \mod q$.  Thus, since $x_p$ and $x_{p+q}$ are fixed points of $f$, $r^kfr^{-k}$ is a non-identity element in $\stab(x_i, x_{p+j})$, and $|B|>2$.  It follows that $|B|=3$.  Since $B$ was an arbitrary base of the given permutation representation of $D_{pq}$, $3\in \bs(D_{pq})$.

\end{proof}

For an element $v \in S$, we denote the orbit of $v$ under the action of $G$ by $\mathcal{O}_v$.  The following lemmas are direct consequences of the Orbit-Stabilizer Theorem, and will be used in proving $3\not\in\ds(D_{pq})$.

\begin{lemma}\label{primeorbits}
Let $G$ be the automorphism group of a graph $\Gamma$ such that $|G| = pm$ where $p$ is a prime that does not divide $m$. Let $v$ be a vertex in $\Gamma$ such that $|\mathcal{O}_v|=p$. If  $g \in G$ has order $p$, then $\mathcal{O}_v=\{v, g(v), g^2(v), \ldots, g^{p-1}(v)\}$.
\end{lemma}

\begin{proof}
By the Orbit-Stabilizer Theorem, $|\stab(v)|=m$. If $g \in G$ has order $p$ then every non-trivial power of $g$ has order $p$ and hence cannot be in $\stab(v)$. Moreover, if $g^j v = g^k v$ where $j, k \in \{1, 2, \ldots, p-1\}$ then $g^{j-k}\in \stab(v)$ and thus $j=k$. 
\end{proof}

\begin{lemma}\label{qstab_orbit} Given an action of $G$ on a set $S$, elements of prime order $q$ in $G$ stabilize set elements in orbits of size less than $q$.
\end{lemma}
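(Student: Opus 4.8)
The plan is to reduce the statement to the action of the cyclic subgroup generated by the order-$q$ element and then invoke the Orbit-Stabilizer Theorem for that subgroup, exactly as the surrounding discussion suggests. Precisely, let $g \in G$ have prime order $q$, and let $v \in S$ be a set element whose $G$-orbit satisfies $|\mathcal{O}_v| < q$; the goal is to show that $g$ fixes $v$, i.e. $g(v) = v$.

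First I would pass from $G$ to the cyclic subgroup $H = \langle g \rangle$, which has order exactly $q$ since $g$ has order $q$. Consider the orbit of $v$ under the restricted action of $H$, namely $\{v, g(v), g^2(v), \ldots, g^{q-1}(v)\}$. Applying the Orbit-Stabilizer Theorem to $H$ acting on $S$, the size of this $H$-orbit divides $|H| = q$. Because $q$ is prime, the $H$-orbit of $v$ must therefore have size either $1$ or $q$.

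The key observation is that the $H$-orbit of $v$ is a subset of the full $G$-orbit $\mathcal{O}_v$, since $H \leq G$ and every power of $g$ already lies in $G$. Hence the size of the $H$-orbit is at most $|\mathcal{O}_v| < q$. Combining this bound with the divisibility constraint from the previous step, the $H$-orbit cannot have size $q$, so it has size $1$. This forces $g(v) = v$, and thus $g$ stabilizes $v$, as claimed.

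I do not anticipate any genuine obstacle: the result is a direct consequence of the Orbit-Stabilizer Theorem, just as the paper advertises. The only point deserving a moment of care is the transfer between orbits under different groups --- one must explicitly note that restricting the action from $G$ to $H$ can only shrink (never enlarge) the orbit of $v$, so the hypothesis $|\mathcal{O}_v| < q$ passes to the $H$-orbit, where it then collides with the primality of $q$ to leave only the trivial orbit.
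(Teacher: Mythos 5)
Your proof is correct and follows essentially the same route as the paper: both restrict to the cyclic subgroup $\langle g\rangle$ of prime order $q$, apply the Orbit-Stabilizer Theorem to that subgroup, and use the containment of the $\langle g\rangle$-orbit of $v$ in $\mathcal{O}_v$ to rule out the nontrivial case. The paper phrases the dichotomy in terms of the stabilizer (trivial or all of $\langle g\rangle$) while you phrase it in terms of the orbit size ($q$ or $1$), but these are the same argument.
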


\begin{proof}
Consider $v \in S$  with orbit of size less than $q$ and an element $g\in G$ of order $q$.  Note $\mathcal{O}_v$ contains the orbit of $v$ under the action of the subgroup $\langle g\rangle$ generated by $g$.  Since $\langle g\rangle$ has prime order, the stabilizer of $v$ under its action is either trivial or the entire subgroup.  If the stabilizer is trivial, the orbit of $v$ under the action of $\langle g\rangle$ is larger than $\mathcal{O}_v$ and we have a contradiction.  Thus the stabilizer of $v$ contains the subgroup generated by any element of order $q$.
\end{proof}


\begin{proposition} \label{flipping}
Given a graph $\Gamma$ with automorphism group $D_{pq}$, where $p$ and $q$ are distinct odd primes, every element of order $2$ in $D_{pq}$ moves vertices in every orbit of order $p$ and every orbit of order $q$.
\end{proposition}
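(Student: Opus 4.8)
The plan is to argue by contradiction, exploiting the defining relation of the dihedral group together with Lemma~\ref{primeorbits}. First I would fix notation: write $D_{pq} = \langle r, f \rangle$, where $r$ is a rotation of order $pq$ and $f$ is a reflection, with $f^2 = e$ and $frf^{-1} = r^{-1}$. Since $pq$ is odd, every element of order $2$ is a reflection of the form $fr^k$, and a one-line computation shows each such reflection $s$ still satisfies $srs^{-1} = r^{-1}$, hence $sr^qs^{-1} = r^{-q}$ and $sr^ps^{-1} = r^{-p}$.

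Next, consider an orbit $\mathcal{O}$ of size $p$ and a vertex $v \in \mathcal{O}$. Because $|D_{pq}| = p\cdot(2q)$ with $p \nmid 2q$ (the primes are distinct and odd) and $r^q$ has order $p$, Lemma~\ref{primeorbits} gives $\mathcal{O} = \{v, r^q v, (r^q)^2 v, \ldots, (r^q)^{p-1}v\}$ with the listed vertices pairwise distinct. The key consequence I would extract is that $(r^q)^2 v \neq v$, which holds precisely because $p > 2$.

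Now suppose, for contradiction, that some order-$2$ element $s$ fixes every vertex of $\mathcal{O}$; in particular $sv = v$ and $s(r^q v) = r^q v$. Applying the conjugation relation, $s(r^q v) = (sr^q s^{-1})(sv) = r^{-q} v$, so $r^q v = r^{-q}v$ and therefore $(r^q)^2 v = v$, contradicting the previous step. Hence no order-$2$ element can fix all of $\mathcal{O}$, i.e. every such element moves some vertex of $\mathcal{O}$. The identical argument with $r^p$ (order $q$) and the prime $q$, noting $q \nmid 2p$ and $q > 2$, handles every orbit of size $q$.

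I expect the only real care needed is verifying that the relation $srs^{-1} = r^{-1}$ holds for \emph{every} reflection $s$, not merely the chosen generator $f$, and that $r^q$ genuinely has order $p$ so that Lemma~\ref{primeorbits} applies; both are short but essential, since the whole contradiction rests on $s$ inverting $r^q$ while simultaneously fixing two distinct points of the induced $p$-cycle. An alternative, slicker route is to observe that the vertices of $\mathcal{O}$ fixed pointwise are governed by the kernel of the action on $\mathcal{O}$, a normal subgroup of $D_{pq}$; since $pq$ is odd all reflections are conjugate, so any normal subgroup containing a reflection is all of $D_{pq}$, which cannot fix a nontrivial orbit pointwise. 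That argument, however, requires importing the classification of normal subgroups of $D_{pq}$, whereas the direct computation above uses only the tools already at hand.
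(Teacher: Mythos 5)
Your proof is correct, but it takes a genuinely different route from the paper's. The paper first isolates a presentation-free fact about permutations: if some cycle of $r$ is disjoint from every cycle of $f$ (i.e.\ lies in the fixed-point set of $f$), then $rf$ acts as $r$ on that cycle, so the order of $rf$ is a multiple of $|r|$; since in $D_{pq}$ the product of a rotation of order $p$ (or $q$) with an involution is again an involution, and $2$ is not a multiple of $p$ or $q$, every nontrivial cycle of the rotation must meet the support of $f$, and Lemma~\ref{primeorbits} identifies each orbit of size $p$ (resp.\ $q$) with such a cycle. You instead argue directly from the dihedral presentation: since $pq$ is odd, every involution is a reflection $s$ satisfying $srs^{-1}=r^{-1}$, and if $s$ fixed both $v$ and $r^qv$ then $r^qv = s(r^qv) = r^{-q}(sv) = r^{-q}v$ would force $(r^q)^2v=v$, contradicting the distinctness of $\{v, r^qv,\ldots,(r^q)^{p-1}v\}$ supplied by Lemma~\ref{primeorbits}. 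Both arguments use Lemma~\ref{primeorbits} in the same way, and oddness of $p$ and $q$ enters both at the same pressure point (for you as $2\not\equiv 0 \pmod p$, for the paper as ``$2$ is not a multiple of $|r|$''), but the mechanisms differ: the paper's disjoint-cycle lemma is a reusable statement about arbitrary permutation groups and never chooses generators, while your computation is more elementary and self-contained, and in fact proves something slightly stronger --- an involution cannot fix even the two vertices $v$ and $r^qv$ simultaneously, so its fixed-point set in each such orbit has size at most one. Your checks are also placed correctly: verifying $srs^{-1}=r^{-1}$ for \emph{every} reflection (not just the generator) and verifying $p\nmid 2q$ so that Lemma~\ref{primeorbits} applies are exactly the two hypotheses the argument consumes. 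The sketched alternative via the kernel of the action on the orbit is likewise valid and needs less than you suggest: only that all reflections are conjugate when $pq$ is odd and that they generate $D_{pq}$, not a full classification of normal subgroups.
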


\begin{proof}
We first note the following property of an arbitrary permutation group $G_P$.  Let $r$ and $f$ be elements with distinct prime orders in $G_P$.  Viewing these elements as products of disjoint subcycles, if there are any subcycles of $r$ disjoint from every subcycle of $f$, then the element $rf$ has order equal to a multiple of $|r|$.

Now suppose $G_P$ is the action of $D_{pq}$ on a graph $\Gamma$, $f$ is an element of order $2$, and $r$ has prime order $p$ or $q$.  The composition $rf$ has order $2$, which is not a multiple of $|r|$.  Thus, by the previous comment, every disjoint subcycle of $r$ contains a vertex in a subcycle of $f$. Furthermore, it follows from Lemma~\ref{primeorbits} that there exists a corresponding subcycle of $r$ for each orbit $\mathcal{O}_v\subseteq V(\Gamma)$ of order $p$ (or $q$ respectively).  Thus the element $f$ moves an element in each orbit of size $p$ or $q$.
\end{proof}


\begin{theorem}Given distinct odd primes, $p$ and $q$, the determining set of $D_{pq}$ does not contain 3.
\end{theorem}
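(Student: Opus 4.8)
The goal is to show that every graph $\Gamma$ with $\Aut(\Gamma)\cong D_{pq}$ has determining number at most $2$; since $\{1,2\}\subseteq\ds(D_{pq})$ by Lemma~\ref{dihedral-constructions} and the length bound gives $\ds(D_{pq})\subseteq\{1,2,3\}$, this yields $3\notin\ds(D_{pq})$. Writing $D_{pq}=\langle r,f\rangle$ with $r$ of order $pq$ and $f$ a reflection, I would argue by contradiction: assume $\Gamma$ has base size $3$, so no two vertices have trivial joint stabilizer, and eventually produce an automorphism of $\Gamma$ outside $D_{pq}$.

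First I would build a dictionary between orbit sizes and point stabilizers via the Orbit--Stabilizer Theorem. Since $|D_{pq}|=2pq$, each orbit size divides $2pq$, and a vertex in an orbit of size $1,2,p,q,2p,2q,pq,2pq$ has stabilizer $D_{pq}$, $\langle r\rangle$, a conjugate of $D_q$, a conjugate of $D_p$, $\langle r^p\rangle\cong\mathbb{Z}_q$, $\langle r^q\rangle\cong\mathbb{Z}_p$, a reflection subgroup, or the trivial group, respectively. An orbit of size $2pq$ gives a base of size $1$, and an orbit $\mathcal{O}_v$ of size $pq$ gives a base of size $2$: its points have conjugate reflection subgroups as stabilizers, no reflection subgroup is normal in $D_{pq}$ (as $pq$ is odd), so two of them are distinct, and distinct reflection subgroups meet trivially. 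Hence I may assume no orbit has size $pq$ or $2pq$.

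Next I would pin down exactly which pairs of stabilizers meet trivially. The key computation is that, by the Chinese Remainder Theorem, every conjugate of $D_p$ and every conjugate of $D_q$ share exactly one reflection; consequently a size-$p$ orbit and a size-$q$ orbit can never be paired to a trivial joint stabilizer. Checking the remaining combinations shows the only vertex pairs with trivial joint stabilizer come from the orbit-size pairs $(2q,2p)$, $(2q,p)$, and $(2p,q)$. Because the action is faithful, $r^q$ (of order $p$) and $r^p$ (of order $q$) must each move a vertex, forcing an orbit of size in $\{p,2p\}$ and an orbit of size in $\{q,2q\}$. Ruling out all three good pairs against these requirements (for instance, a size-$2p$ orbit would force both size-$q$ and size-$2q$ orbits to be absent, contradicting faithfulness) forces every orbit to have size in $\{1,2,p,q\}$, with at least one orbit of size $p$ and at least one of size $q$.

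The main step, and what I expect to be the hardest part, is to exhibit an automorphism outside $D_{pq}$ in this final configuration. I would define $\beta$ to act as $f$ on the union of the size-$p$ orbits and as the identity on all other vertices. To see $\beta\in\Aut(\Gamma)$, note that on the size-$p$ orbits $\beta$ agrees with the automorphism $f$, so all edges incident to two size-$p$ vertices are preserved; and for a size-$p$ orbit $O$ and any orbit $O'$ of size $1,2$, or $q$, one checks that the relevant stabilizer intersection has index $|O|\,|O'|$ in $D_{pq}$, so $D_{pq}$ is transitive on the ordered pairs $O\times O'$, the bipartite graph between $O$ and $O'$ is complete or empty, and it is therefore preserved by any permutation fixing $O'$ pointwise. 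Finally $\beta\notin D_{pq}$: the elements of $D_{pq}$ fixing every size-$q$ orbit pointwise form $\langle r^q\rangle$, which acts on each size-$p$ orbit as a nontrivial rotation, whereas $\beta$ acts there as $f$, which by Proposition~\ref{flipping} is a nontrivial reflection and hence not a rotation. This contradicts $\Aut(\Gamma)\cong D_{pq}$ and completes the argument. The delicate point to nail down is precisely this transitivity of $D_{pq}$ on the inter-orbit pairs, since it is what makes each inter-orbit bipartite graph complete or empty and lets $\beta$ be assembled orbit-by-orbit into a genuine automorphism.
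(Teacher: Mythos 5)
Your proposal is correct, and while it rests on the same master idea as the paper's proof---assuming determining number $3$, every bipartite graph between the relevant orbits is complete or empty, so one can ``cut a reflection in half'' and assemble a new automorphism orbit-by-orbit---the route is genuinely different at each stage. For orbit-size control, the paper factors $2pq=|\mathcal{O}_1|\,|\mathcal{O}_2^1|\,|\mathcal{O}_3^{1,2}|$ via Orbit--Stabilizer to produce one orbit of size $q$ (with $p<q$), whereas you build the full orbit-size/stabilizer dictionary, invoke the Chinese Remainder Theorem fact that conjugates of $D_p$ and $D_q$ share exactly one reflection (the same computation the paper uses in Proposition~\ref{D15} to show $3\in\bs(D_{pq})$), and combine with faithfulness to pin all orbit sizes into $\{1,2,p,q\}$ with orbits of both sizes $p$ and $q$ present. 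For the complete-or-empty step, the paper propagates a single adjacency using an element of order $q$ that fixes smaller orbits pointwise (Lemmas~\ref{primeorbits} and~\ref{qstab_orbit}); you instead compute pair stabilizers and show $D_{pq}$ is transitive on $O\times O'$, which handles all orbit pairs uniformly. For the endgame, the paper's $h$ is supported on size-$q$ orbits and contradicts Proposition~\ref{flipping}, while your $\beta$ is supported on size-$p$ orbits and is excluded from $D_{pq}$ by identifying the pointwise stabilizer of the size-$q$ orbits as $\langle r^q\rangle$; both exclusion arguments are valid. The paper's propagation argument is more elementary, but your dictionary buys two details the paper leaves implicit: the existence of a size-$p$ orbit (which the paper's final contradiction with Proposition~\ref{flipping} tacitly requires) and the exclusion of size-$2p$ orbits when $2p<q$, a case the paper's Step~1 (which only treats orbit sizes exceeding $q$) does not literally cover, though its prime-stabilizer trick would. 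Your two ``one checks'' claims---the pair-stabilizer indices $2q$, $q$, $2$ and the identification of the pointwise stabilizer as $\langle r^q\rangle$---are both correct as stated.
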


\begin{proof}
Without loss of generality, suppose $p<q$.  Let $\Gamma$ be a graph with Aut($\Gamma$)$=D_{pq}$, and let $f$ be an element of $D_{pq}$ of order 2.  Define $X=\{x_{11},x_{12},x_{21},x_{22},\ldots,x_{k1},x_{k2}\}$ to be the set of vertices in $\Gamma$ in orbits of size $q$ moved by $f$, where $f$ transposes $x_{i1}$ and $x_{i2}$.  Let $h$ be the permutation of $V(\Gamma)$ given by $h=(x_{11}x_{12})(x_{21}x_{22})\cdots(x_{k1}x_{k2})$. We will show that if $\Gamma$ has a determining number 3, then $h$ is an element of Aut($\Gamma$) of order 2.  Since an orbit of size $p$ is disjoint from any orbit of size $q$, $h$ fixes orbits of size $p$ and we arrive at a contradiction to Proposition \ref{flipping}.

For all $v_i, v_j,$ and $v_k \in V(\Gamma)$, let $\mathcal{O}_k$, $\mathcal{O}_k^i$, and $\mathcal{O}_k^{i,j}$ denote the orbits of $v_k$ under the actions of $D_{pq}$, $\stab(v_i)$, and $\stab(v_i, v_j)$, respectively. Suppose $\{v_1,v_2,v_3\}$ is a minimal base for the action of $D_{pq}$ on $\Gamma$.
 \bigskip


\noindent \textbf{Step 1:  For all $v_i\in V(\Gamma)$, $|\mathcal{O}_i|\le q$.}

Suppose by way of contradiction that there exists $v_i\in V(\Gamma)$ such that $|\mathcal{O}_i|>q$.  Since $|D_{pq}|=2pq$, $|\mathcal{O}_i|=2pq, pq, 2p,$ or $2q$.  If $|\mathcal{O}_i|=2pq$ then by the Orbit-Stabilizer Theorem $|\stab(v_i)|=1$ which makes  $\{v_i\}$ a minimum size base, and we arrive at a contradiction.  If $|\mathcal{O}_i|=pq, 2p,$ or $2q$, then $|\stab(v_i)|$ is prime.  Furthermore, since the action of $D_{pq}$ is faithful, there exists a vertex $v_k$ not fixed by $\stab(v_i)$.  Thus, $|\mathcal{O}_k^i|$ divides $|\stab(v_i)|$ and is not equal to one, which implies $|\mathcal{O}_k^i|=|\stab(v_i)|$ and $|\stab(v_i, v_k)|=1$ making $\{v_i,v_k\}$ a minimum size base.

\bigskip


\noindent \textbf{Step 2: $|\mathcal{O}_1|, |\mathcal{O}_2|,$ and $|\mathcal{O}_3|$ are not all strictly less than $q.$  Thus $X$ is non-empty and $h$ has order 2.}

Since $\{v_1,v_2,v_3\}$ is a base, $|\stab(v_1, v_2, v_3)|=1$. By the Orbit-Stabilizer Theorem
\begin{align*}
2pq=|D_{pq}|= |\mathcal{O}_1| |\stab(v_1)| &= |\mathcal{O}_1| |\mathcal{O}^1_2| |\stab(v_1, v_2)|=|\mathcal{O}_1| |\mathcal{O}^1_2| |\mathcal{O}_3^{1,2}|.
\end{align*}
 Since $\Gamma$ has determining number 3, one of $|\mathcal{O}_1|,|\mathcal{O}^1_2|,$ and $|\mathcal{O}_3^{1,2}|$ must be equal to $q$. Furthermore, for all $i,j,k$, $|\mathcal{O}_k^{i,j}|\le|\mathcal{O}_k^{i}|\le|\mathcal{O}_k|$.  Thus $|\mathcal{O}_1|,  |\mathcal{O}_2|,$ and $|\mathcal{O}_3|$ cannot all be strictly less than $q$.  Therefore, with Step 1, we have at least one orbit of size $q$.  By Proposition~\ref{flipping}, it follows that $X$ is nonempty and therefore $h$ has order 2.

\bigskip


\noindent \textbf{Step 3: $h\in$ Aut($\Gamma$).}

By construction, $h$ acts as $f\in$ Aut($\Gamma$) on orbits of size $q$, and as the identity on all other orbits.  Thus to show $h\in$ Aut($\Gamma$), we need only verify that $h$ preserves edges between vertices in orbits of size $q$ and vertices in all other orbits.   Recall that by Step 1,  if the determining number of $\Gamma$ is 3, all orbits are of size $q$ or less.
Let $\{x_1, \ldots, x_q\}$ and $\{y_1, \ldots, y_k\}$ be orbits under the action of $D_{pq}$ on $\Gamma$, with $k<q$.   We prove that if $x_i \sim y_j$ for any $1 \leq i \leq q$ and $1\leq j \leq k$ then $x_i \sim y_j$ for all $1 \leq i \leq q$ and $1\leq j \leq k$.

Assume without loss of generality that $x_1\sim y_1$.  By definition of orbit, for $1\le j \le k$  there exists an element $g_j\in$ Aut($\Gamma$) such that $y_j=g_j(y_1)$.  Furthermore, since $g_j$ is an automorphism of $\Gamma$, $y_j=g_j(y_1)\sim g_j(x_1)=x_l$ for some $x_l\in\{x_1, \ldots, x_q\}$.  Thus, if a vertex in orbit $\{x_1, \ldots, x_q\}$ is adjacent to a vertex in orbit $\{y_1, \ldots, y_k\}$, every vertex in orbit $\{y_1, \ldots, y_k\}$ is adjacent to some vertex in orbit  $\{x_1, \ldots, x_q\}$.

To complete the proof, we show that if $y_j\in \{y_1, \ldots,y_k\}$ is adjacent to any vertex in  $\{x_1, \ldots, x_q\}$, then it is adjacent to every vertex in  $\{x_1, \ldots, x_q\}$.
Let $g\in D_{pq}$ be an element of order $q$.  Then by Lemma~\ref{primeorbits}, vertices $x_1$, $x_2$, $\ldots$, $x_q$ appear as a $q$-cycle in the permutation representation of $g$.  Assume without loss of generality that $x_1 \sim y_j$ for some $1\le j\le k$.  Since $g$ is an automorphism of $\Gamma$, $x_2=g(x_1)\sim g(y_j)$.  Furthermore, by Lemma~\ref{qstab_orbit}, since $k$ is less than the prime order of $g$, we know $g$ fixes elements in the orbit $\{y_1, \ldots, y_k\}$.  Thus $x_2=g(x_1)\sim g(y_j)=y_j$.  Similarly $x_i \sim y_j$ for all $1 \leq i \leq q$.

Thus, since $h$ acts as the identity on orbits of size other than $q$, preserves edges within orbits of size $q$, and between orbits of size $q$ and orbits of all other sizes there are either no edges, or all possible edges, $h\in$ Aut($\Gamma$).

\end{proof}


\begin{corollary}
For distinct odd primes $p$ and $q$, $\bs(D_{pq})=\{1,2,3\}$ and $\ds(D_{pq})=\{1,2\}$.
\end{corollary}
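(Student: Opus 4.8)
The plan is to assemble the two equalities from results already in hand, since all of the substantive work has been done in Proposition~\ref{D15} and the preceding theorem; what remains is essentially bookkeeping together with one upper-bound observation. Throughout I would keep in mind the containment $\ds(D_{pq}) \subseteq \bs(D_{pq})$ guaranteed by the definitions.

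First I would establish $\bs(D_{pq}) = \{1,2,3\}$. For the lower containment, Lemma~\ref{dihedral-constructions} already gives $\{1,2\} \subseteq \ds(D_{pq}) \subseteq \bs(D_{pq})$, and Proposition~\ref{D15} supplies $3 \in \bs(D_{pq})$, so $\{1,2,3\} \subseteq \bs(D_{pq})$. For the reverse containment I would invoke the length bound from the start of Section~\ref{general}: since $|D_{pq}| = 2pq$ has exactly three prime factors counted with multiplicity, the length of $D_{pq}$ is at most $3$, and hence every base size of every faithful action is at most $3$. Equivalently, applying Corollary~\ref{orbit_pk} to an element of order $p$ gives $j = 2$ prime factors in $|D_{pq}|/p = 2q$, again bounding base sizes by $j+1 = 3$. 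Combining the two containments yields $\bs(D_{pq}) = \{1,2,3\}$.

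Next I would pin down $\ds(D_{pq}) = \{1,2\}$. The lower containment $\{1,2\} \subseteq \ds(D_{pq})$ is again immediate from Lemma~\ref{dihedral-constructions}. For the upper containment I would combine $\ds(D_{pq}) \subseteq \bs(D_{pq}) = \{1,2,3\}$ with the preceding theorem, which asserts $3 \notin \ds(D_{pq})$; together these force $\ds(D_{pq}) \subseteq \{1,2\}$. Hence $\ds(D_{pq}) = \{1,2\}$, and in particular $\bs(D_{pq}) \neq \ds(D_{pq})$, the main result advertised in the introduction.

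Because each ingredient is already available, I do not expect a genuine obstacle here; the only point requiring care is citing the upper bound $\bs(D_{pq}) \subseteq \{1,2,3\}$ correctly, since it is the one containment not stated as a standalone result. I would attribute it explicitly to the length argument or to Corollary~\ref{orbit_pk} rather than leaving it implicit, so that the corollary's short assembly is fully justified.
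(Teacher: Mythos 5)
Your proposal is correct and matches the paper's intent exactly: the paper leaves this corollary without an explicit proof precisely because it is the assembly you describe, namely Lemma~\ref{dihedral-constructions} for $\{1,2\}\subseteq\ds(D_{pq})\subseteq\bs(D_{pq})$, Proposition~\ref{D15} for $3\in\bs(D_{pq})$, the length bound (or Corollary~\ref{orbit_pk}) for $\bs(D_{pq})\subseteq\{1,2,3\}$, and the preceding theorem for $3\notin\ds(D_{pq})$. Your care in making the upper-bound containment explicit is a reasonable addition, but there is no substantive difference in approach.
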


\section{Open Questions}

We conclude with a list of open questions.

\begin{enumerate}
\item In Section~\ref{dihedral-section} we characterized the base size set and determining set of $D_n$ where $n$ has the form $p^k,$ $2p^k$, or $pq$ for distinct odd primes $p$ and $q$, finding $\ds(D_n)=\bs(D_n)$ except in the case $n=pq$.  Which properties of $n$ determine whether $\ds(D_n)=\bs(D_n)$?  What are the base size sets and determining sets of the remaining dihedral groups?

\item In Section~\ref{Abelian-section} we showed that $\bs(\mathbb{Z}_{p^{\alpha}})=\{1\}$, though the length of $\mathbb{Z}_{p^{\alpha}}$ is $\alpha$.  Similarly, the standard action of $S_n$ as permutations of the set $\{1, 2, \ldots, n\}$ has base size $n-1$, but the length of $S_n$ is $n$ for $n=6,7$ and greater than $n$ for $n \geq 8$ \cite{Cameron89,Gibbons09}.  Do there exist base sizes of $S_n$ larger than $n$?  More generally, for which groups $G$ does there exist an action with base size equal to the length of $G$?

\item Are there groups $G$ for which $\bs(G)$ and $\ds(G)$ are arbitrarily far apart?

\item The known base size sets and determining sets have all been of the form $\{1,2,\ldots,k\}$.  Is this always true, or do there exist groups for which $|\bs(G)|$ or $|\ds(G)|$ is smaller than its largest element?

\item Given the base size sets and determining sets of two groups, what is the base size set and determining set of their direct product or semi-direct product?

\end{enumerate}

\bibliographystyle{plain}
\bibliography{base-size-set}


\end{document}